\documentclass[11pt]{article}

\usepackage[margin=1.08in]{geometry}
\usepackage[T1]{fontenc}
\usepackage{lmodern}
\usepackage{microtype}
\usepackage{amsmath,amssymb,amsthm,mathtools}
\usepackage{aliascnt}
\usepackage{enumitem}
\usepackage{booktabs}
\usepackage{authblk}
\usepackage{xcolor}
\usepackage[hidelinks]{hyperref}
\usepackage[nameinlink,noabbrev]{cleveref}
\crefname{theorem}{theorem}{theorems}
\crefname{lemma}{lemma}{lemmas}
\crefname{proposition}{proposition}{propositions}
\crefname{corollary}{corollary}{corollaries}

\newtheorem{theorem}{Theorem}[section]
\newaliascnt{proposition}{theorem}
\newtheorem{proposition}[proposition]{Proposition}
\aliascntresetthe{proposition}
\newaliascnt{lemma}{theorem}
\newtheorem{lemma}[lemma]{Lemma}
\aliascntresetthe{lemma}
\newaliascnt{corollary}{theorem}
\newtheorem{corollary}[corollary]{Corollary}
\aliascntresetthe{corollary}
\theoremstyle{definition}
\newaliascnt{definition}{theorem}

\aliascntresetthe{definition}
\newaliascnt{remark}{theorem}

\aliascntresetthe{remark}

\newcommand{\bits}{\{0,1\}^{*}}
\newcommand{\eps}{\varepsilon}
\newcommand{\Sub}{\operatorname{Sub}}
\newcommand{\zz}{\operatorname{z}}
\newcommand{\oo}{\operatorname{o}}
\newcommand{\wt}{\operatorname{wt}}
\newcommand{\lexlt}{<_{\mathrm{lex}}}

\newcommand{\PhiNeg}{\Phi_{-}}
\newcommand{\col}[2]{\begin{pmatrix}#1\\#2\end{pmatrix}}
\newcommand{\email}[1]{\href{mailto:#1}{\nolinkurl{#1}}}

\title{A Complete Proof for Tu-Deng Conjecture}
\author[1]{Renzhang Liu\thanks{\email{liurenzhang@amss.ac.cn}}}
\author[1]{Hengyi Luo\thanks{\email{luohengyi23@mails.ucas.ac.cn}}}
\author[1]{Tianyuan Xie\thanks{\email{terencexty@gmail.com}}}
\affil[1]{State Key Laboratory of Mathematical Sciences,
Academy of Mathematics and Systems Science, Chinese Academy of Sciences,
Beijing, China}
\date{30, July 2026}

\begin{document}
\maketitle

\begin{abstract}
Let $N=2^k-1$ and let $\wt(n)$ denote the binary Hamming weight.  The
Tu-Deng conjecture asserts that, for every $1\le t\le N-1$, at most
$2^{k-1}$ pairs $(a,b)\in\{0,\ldots,N-1\}^2$ satisfy
$a+b\equiv t\pmod N$ and $\wt(a)+\wt(b)<k$. Partial results are known.
We give a complete proof of this conjecture. We first show that the Tu-Deng
counts equals the number of cyclic carry solutions for which $\wt(B)-\wt(A)<0$
and $A+t\equiv B\pmod N$. The enumerator of the cyclic carry solutions factors as
\[
  C_v = 1+(X+Y-1)J_v+X^{\zz(v)+1}Y^{\oo(v)+1},
\]
where $t=10v$ is the binary expansion of $t$(least significant bits first) and $J_v$ enumerates the language
\[
  \Sub(v)\ \mathbin{\dot\cup}\
  \{u\in\partial_1\Sub(v):u<_{\rm lex}v\}.
\]
Estimating the strict negative half-plane mass of $C_v$ gives the desired bound.
\end{abstract}

\section{Introduction}

For a nonnegative integer $n$, write $\wt(n)$ for the number of $1$'s in
its binary expansion.  Tu and Deng introduced the following conjecture in
connection with Boolean functions of optimal algebraic immunity
\cite{TuDeng2011}.

\begin{theorem}[Tu-Deng conjecture]\label{thm:main}
Let $k\ge2$, put $N=2^k-1$, and let $1\le t\le N-1$.  Then
\[
 \left|\left\{(a,b)\in\{0,\ldots,N-1\}^2:
 a+b\equiv t\pmod N,
 \ \wt(a)+\wt(b)<k\right\}\right|
 \le 2^{k-1}.
\]
\end{theorem}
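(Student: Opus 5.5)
Following the route the abstract announces, I would convert the Tu--Deng count into a count of carry patterns, compute a two-variable generating function for those patterns, and bound its mass in a negative half-plane.

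\textbf{Reduction to cyclic carries.} For $a,b\in\{0,\ldots,N-1\}$ with $a+b\equiv t\pmod N$, addition modulo $N=2^k-1$ is addition in $\mathbb{Z}/N$ with end-around carry. Each carry lowers the total weight by exactly one, so
\[
\wt(a)+\wt(b)=\wt(t)+c(a,b),
\]
where $c(a,b)$ is the number of cyclic carries. (The edge case $a+b=N$ would need separate checking.) The condition $\wt(a)+\wt(b)<k$ therefore reads $c<k-\wt(t)$. I would change variables, for instance $A=a$ and $B=t-a\bmod N$, with $\wt(B)-\wt(A)$ compared against a shifted quantity. The aim is the paper's normal form: the Tu--Deng count equals the number of cyclic carry solutions with $A+t\equiv B\pmod N$ and $\wt(B)-\wt(A)<0$. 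Since rotating $t$ permutes solutions bijectively and preserves weights, I may assume $t=10v$ (least significant bit first), i.e.\ $t$ starts with a $1$ followed by a $0$. The endpoint $t=N$ is excluded by hypothesis.

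\textbf{Enumerator.} I would encode solutions as walks on the two-state carry automaton (carry in $\{0,1\}$) reading the bits of $t$ cyclically. Each step records bit pairs $(A_i,B_i)$, and I weight them by $X$ or $Y$ according to whether position $i$ contributes to $\wt(A)$ or $\wt(B)$. The closing condition that the carry returns to its initial value turns the count into a trace of a product of $2\times2$ transfer matrices. Expanding that trace, the all-carry path and the no-carry path should give the terms $X^{\zz(v)+1}Y^{\oo(v)+1}$ and $1$. The mixed paths factor through the entry and exit points of carry runs, which gives $(X+Y-1)J_v$. I would show by induction on $|v|$, appending one bit at a time, that $J_v$ counts $\Sub(v)$ together with the lexicographically smaller boundary words $\{u\in\partial_1\Sub(v):u\lexlt v\}$. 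This is a combinatorial bookkeeping lemma; I take the formula for $C_v$ as the established factorization.

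\textbf{Half-plane estimate (main obstacle).} Write $C_v=\sum c_{ij}X^iY^j$. I need
\[
M_-(C_v)=\sum_{j<i}c_{ij}\le 2^{k-1}.
\]
The term $1$ contributes nothing. The last term lies on the line $j-i=\oo(v)-\zz(v)$, so it contributes at most $1$. For the middle term, multiplying a monomial $X^pY^q$ by $X+Y-1$ gives $X^{p+1}Y^q+X^pY^{q+1}-X^pY^q$. Its negative-half-plane mass is therefore $1$ if $q<p$ or $q=p-1$, and $0$ otherwise, up to the boundary term from $X^{p+1}Y^q$ when $q=p$. So $M_-$ roughly equals the number of words $u$ in $J_v$ with $\oo(u)\le\zz(u)$, plus boundary corrections.

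The count of words in $J_v$ is at most about $2^{k-2}$ plus the lexicographic extras. I would pair each word with its complement, or use a reflection/ballot argument, to show that at most half of the relevant mass lies strictly on the negative side. The lexicographic condition $u\lexlt v$ is exactly what should break ties on the diagonal. I expect this step to be the hardest: it needs an injection from the negative-side words of $J_v$ into a set of size $2^{k-1}$, uniformly in $v$. I would first try the involution that flips bits after the first position where $u$ and $v$ differ. I would check it first on the extremal cases $v=0^{k-2}$ and $v=1^{k-2}$, where equality $2^{k-1}$ should be attained, and then combine.
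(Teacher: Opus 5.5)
There is a genuine gap, and it starts before the step you flag as hardest: the quantity you propose to bound is not the Tu--Deng count. In $C_v=1+\Lambda J_v+X^{\zz(v)+1}Y^{\oo(v)+1}$ the monomials enumerate boundary words $u$ (the words of active carry choices), not solutions. Such a word fixes only the $|u|$ active bits of $A$. The other $k-|u|$ bits are free, so $u$ stands for $2^{k-|u|}$ cyclic carry solutions, and
\[
  |S_{t,k}|=\sum_{i>j}c_{ij}\,2^{k-i-j}=2^k\,\PhiNeg(C_v),
\]
not $\sum_{j<i}c_{ij}$. Already for $k=2$, $t=1$ (so $v=\eps$) one has $C_\eps=X+Y+XY$: your $M_-$ equals $1$, whereas the count is $2$. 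Your transfer-matrix picture conflates two polynomials. Weighting each position by its contribution to $\wt(A)$ and $\wt(B)$ gives $(1+XY)^k\,C_v\bigl(X/(1+XY),\,Y/(1+XY)\bigr)$. Its half-plane coefficient sum is indeed the count, but that polynomial is not $C_v$, and the factorization you quote is a statement about $C_v$. (Even for the unweighted sum your layer table is off: the layer $q=p-1$ has mass $0$, not $1$.)

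The missing idea is what the correct weighting buys. $\PhiNeg$ is evaluation at $X=Y=\tfrac12$ restricted to $\deg_X>\deg_Y$, and $\Lambda(\tfrac12,\tfrac12)=0$. So $\PhiNeg(\Lambda X^pY^q)$ vanishes except on the diagonal $p=q=m$ (value $2^{-2m-1}$) and the subdiagonal $p=m+1$, $q=m$ (value $-2^{-2m-2}$). This gives
\[
  \PhiNeg(\Lambda J_v)=\tfrac12+\sum_{m\ge1}2^{-2m}\bigl(\tfrac12\,j_{m,m}-j_{m,m-1}\bigr),
\]
where $j_{a,b}$ is the number of words in $\mathcal L(v)=D(v)\mathbin{\dot\cup}\{y\in\partial_1D(v):y\lexlt v\}$ with $a$ zeros and $b$ ones. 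What you then need is the termwise inequality $j_{m,m}\le2j_{m,m-1}$. The paper gets it from two facts: $\mathcal L(v)$ is closed under deleting a single $1$ (deleting a $1$ lowers a word lexicographically, so $\mathcal H(v)$-words stay in $\mathcal L(v)$), and the marked-deletion injection gives $b\,j_{a,b}\le(a+b)\,j_{a,b-1}$. The complement pairing, ballot argument or bit-flipping involution you mention is not developed and does not target this inequality.

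Your remark that the exceptional monomial contributes at most one solution is true (it has total degree $k$), but that is exactly the difficulty. The general bound on the main term is exactly $2^{k-1}$, so when $\zz(v)>\oo(v)=o$ you need strict slack. The paper extracts it from $\deg_YJ_v\le o$ (hence $j_{o+1,o+1}=0$) and $j_{o+1,o}\ge1$, which makes the $m=o+1$ defect at most $-2^{-2o-2}$.

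Two further pieces are missing. The factorization is assumed rather than proved. The complementing substitution $A=N-x$, $B=y$ is left unstated, including the check that the endpoint solutions $A=0^k$ and $B=1^k$ have $\wt(B)-\wt(A)=\wt(t)>0$. As it stands, the proposal does not yet yield the theorem.
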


Tu and Deng verified the conjecture for $k\leq 29$ \cite{TuDeng2011}. Later Flori verified the conjecture for $k$ up to $40$ \cite{Flori2012}. This conjecture proves to be true for $t$ with some special form \cite{CGGC2019, CLS2011, ChenLinWei2020, CHZ2015, DY2012, FRCM2010, QSF2016} .

Spiegelhofer and Wallner proved the
conjecture for a set of parameters of asymptotic density one
\cite{SpiegelhoferWallner2019}.  A recent proof of Cusick's related
sum-of-digits conjecture uses first-exit laws of principal subsequence ideals
and marked deletion counts \cite{Cheng2026}.

We give a complete proof in the work. Our proof consists of the following steps.
Firstly, by a complementing
change of variables and enumerating the cyclic carries, we reduce Tu-Deng count to computing the strict
negative half-plane $(\deg(X)>\deg(Y))$ mass of
\[
  C_v(X,Y)=B_{0v}^1(X,Y)+B_v^0(X,Y),
\]
where $B_w^b$ enumerates the directional boundary of the language $D(w)$.  Secondly,
we show that $C_v$ can be factorized as
\[
  C_v=1+(X+Y-1)(H_v+I_v)+X^{\zz(v)+1}Y^{\oo(v)+1}.
\]
where $H_v$ is modelled as the enumerator of a language equipped with a lexicographical order.
Finally, we show that the strict
negative half-plane functional annihilates
all coefficient layers of $C_v$ except the diagonal and the immediately
lower diagonal. Comparing those two layers term by term yields the desired inequality.

\section{Directional boundaries}

Let $\bits$ be the set of finite binary words, including the empty word
$\eps$.  For $u\in\bits$, let $|u|$ be its length, and let
\[
  \zz(u)=|u|-\wt(u),
  \qquad
  \oo(u)=\wt(u)
\]
be the number of $0$'s and $1$'s in $u$ respectively.
Write $u\preceq v$ when $u$ is a subsequence of
$v$, and put
\[
  D(v)=\Sub(v)=\{u\in\bits:u\preceq v\}.
\]
The set $D(v)$ is finite and closed under deletion, hence in particular
prefix-closed.\\
For a finite prefix-closed language $D\subset\bits$ and $b\in\{0,1\}$,
define its directional boundary by
\[
  \partial_bD=\{ub:u\in D,\ ub\notin D\}.
\]
It follows that $\partial_0D(v), \partial_1D(v), D(v)$ are pair-wise disjoint.
That is,
$$\partial_0D(v)\cap \partial_1D(v)=\partial_bD(v)\cap D(v)=\emptyset.$$
For indeterminates $X,Y$, set
\[
  \omega(u)=X^{\zz(u)}Y^{\oo(u)}.
\]
For a generator $v$, define
\[
  I_v=\sum_{u\in D(v)}\omega(u),
  \qquad
  B_v^b=\sum_{u\in\partial_bD(v)}\omega(u),
  \qquad
  B_v=\col{B_v^0}{B_v^1}.
\]

\begin{lemma}\label{lem:boundary-recursion}
Let
\[
 A_0=\begin{pmatrix}X&0\\Y&1\end{pmatrix},
 \qquad
 A_1=\begin{pmatrix}1&X\\0&Y\end{pmatrix}.
\]
Then
\[
  B_{v0}=A_0B_v,
  \qquad
  B_{v1}=A_1B_v,
  \qquad
  B_\eps=\col{X}{Y}.
\]
\end{lemma}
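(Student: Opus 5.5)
The plan is to prove the recursion by describing the boundaries of $D(vc)$ explicitly in terms of those of $D(v)$, for $c\in\{0,1\}$. Each matrix identity will then follow by taking weights, using that $\omega$ is multiplicative: $\omega(xb)=\omega(x)X$ if $b=0$ and $\omega(x)Y$ if $b=1$. The argument rests on two standard facts about subsequences, both obtained from the greedy (rightmost) embedding.

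\emph{(F1)} $wc\preceq vc$ if and only if $w\preceq v$. The last letter $c$ of $wc$ may always be matched to the final letter of $vc$.

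\emph{(F2)} If $b\neq c$, then $wb\preceq vc$ if and only if $wb\preceq v$. The final $c$ of $vc$ cannot serve as the image of the last letter $b$, and so it is useless in any embedding of $wb$.

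From (F1) I will also extract \emph{(F3)}: if $u\preceq vc$ but $u\not\preceq v$, then $u=u'c$ with $u'\preceq v$. Indeed, every embedding of $u$ must use the last letter of $vc$, so $u$ is nonempty and ends in $c$. The base case is immediate: $D(\eps)=\{\eps\}$, so $\partial_0D(\eps)=\{0\}$ and $\partial_1D(\eps)=\{1\}$, which gives $B_\eps=(X,Y)^{T}$.

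By the $0\leftrightarrow1$ symmetry, which swaps $X$ and $Y$ and turns $A_0$ into $A_1$ after swapping coordinates, it suffices to treat $c=0$. I will establish two set identities.

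First, $\partial_0D(v0)=\{x0:x\in\partial_0D(v)\}$. Take $u0\in\partial_0D(v0)$. By (F1), the condition $u0\not\preceq v0$ means $u\not\preceq v$. Together with $u\preceq v0$, (F3) gives $u=u'0$ with $u'\preceq v$ and $u'0\not\preceq v$, that is, $u\in\partial_0D(v)$. Conversely, let $x=u'0\in\partial_0D(v)$. Then $x\preceq v0$ by (F1), and $x0\not\preceq v0$ by (F1) again, since $x\not\preceq v$.

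Second, $\partial_1D(v0)=\partial_1D(v)\,\dot\cup\,\{x1:x\in\partial_0D(v)\}$. By (F2), $u1\not\preceq v0$ is equivalent to $u1\not\preceq v$. I then split according to whether $u\preceq v$.
\begin{itemize}
\item If $u\preceq v$, then $u1$ lies exactly in $\partial_1D(v)$.
\item If $u\preceq v0$ but $u\not\preceq v$, then the first identity's argument shows $u\in\partial_0D(v)$. Moreover $u1\not\preceq v$ holds automatically, and conversely every such $u$ qualifies.
\end{itemize}
The union is disjoint because the two cases differ in whether $u\preceq v$.

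Taking weights gives $B^0_{v0}=XB^0_v$ and $B^1_{v0}=YB^0_v+B^1_v$, which is exactly $B_{v0}=A_0B_v$. The case $c=1$ gives $B^1_{v1}=YB^1_v$ and $B^0_{v1}=B^0_v+XB^1_v$, that is, $B_{v1}=A_1B_v$.

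The main obstacle is only bookkeeping: making sure that (F3) is applied correctly, and that the two pieces of $\partial_1D(v0)$ neither overlap nor miss a word. Everything else is routine once the greedy-embedding facts (F1)–(F2) are stated carefully.
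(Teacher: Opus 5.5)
Your proof is correct and follows the paper's argument: you use the same two set decompositions of $\partial_0D(v0)$ and $\partial_1D(v0)$ and the same case split on whether $u\preceq v$, then read off weights. The only differences are presentational: your facts (F1)--(F3) make explicit the subsequence observations the paper uses implicitly (including for the first identity, which the paper calls easy), and you obtain the $v1$ case from the complement symmetry where the paper writes out the mirror decompositions, which is equivalent.
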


\begin{proof}
We show the following boundary
decompositions holds.
\begin{align*}
 \partial_0D(v0)
   &=\{x0:x\in\partial_0D(v)\},\\
 \partial_1D(v0)
   &=\partial_1D(v)\mathbin{\dot\cup}
     \{x1:x\in\partial_0D(v)\}.
\end{align*}

The first equality follows easily by appending a $0$ to words in $\partial_0D(v)$ and deleting the final $0$ from  words in $\partial_0D(v0)$. We only prove the second one.

By definition, $\partial_1D(v0)=\{x1:x\in D(v0),\ x1\notin D(v0)\}.$

Since $\partial_1D(v)\subseteq\partial_1D(v0)$, it follows easily that
$$\partial_1D(v0)\supseteq \partial_1D(v){\cup} \{x1:x\in\partial_0D(v)\}.$$
Let $w=x1\in\partial_1D(v0)$, where $x\in D(v0)$.
\begin{enumerate}
\item If $x\in D(v),$ then $w=x1\notin D(v)$. Otherwise, $w=x1\in D(v)\subseteq D(v0)$, contrary to the definition of $\partial_1D(v0)$. Therefore, $w\in \partial_1D(v)$.
\item If $x\notin D(v),$ then $x=u0\in D(v0)\setminus D(v)$. Hence $u\in D(v)$ and $u0\notin D(v)$ and $x\in \partial_0D(v)$ by definition. Therefore, $w\in \{x1:x\in\partial_0D(v)\}.$
\end{enumerate}
Then $$\partial_1D(v0)\subseteq \partial_1D(v){\cup} \{x1:x\in\partial_0D(v)\}.$$
Since $D(v)\cap \partial_0D(v)=\emptyset,$
$$\partial_1D(v){\cap} \{x1:x\in\partial_0D(v)\}=\{x1: x\in D(v), x1\notin D(v), x\in\partial_0D(v)\}=\emptyset.$$ Then
$$\partial_1D(v0)=\partial_1D(v)\mathbin{\dot\cup}\{x1:x\in\partial_0D(v)\}.$$
This proves the second equality and we have
$$B_{v0}^0=XB_v^0,\quad  B_{v0}^1=YB_v^0+B_v^1,\quad  B_{\eps}^0=X,\quad  B_{\eps}^1=Y.$$
Symmetrically,
\begin{align*}
 \partial_0D(v1)
   &=\partial_0D(v)\mathbin{\dot\cup}
     \{x0:x\in\partial_1D(v)\},\\
 \partial_1D(v1)
   &=\{x1:x\in\partial_1D(v)\},
\end{align*}
and $$B_{v1}^0=B_v^0+XB_v^1,\quad  B_{v1}^1=YB_v^1.$$ This concludes the proof.

%For example, if $x=a0\in\partial_0D(v)$, then $x\in D(v0)$ by using the
%new final $0$, whereas $x0\notin D(v0)$; the converse follows by asking
%whether the prefix of a boundary word uses the new final position.  The
%recurrences for $v1$ are symmetric:
%\begin{align*}
% \partial_0D(v1)
%   &=\partial_0D(v)\mathbin{\dot\cup}
%     \{x0:x\in\partial_1D(v)\},\\
% \partial_1D(v1)
%   &=\{x1:x\in\partial_1D(v)\}.
%\end{align*}
%Taking weighted enumerators gives the stated matrices.  For the empty
%generator the two boundary words are $0$ and $1$.
\end{proof}

Put
\[
  \Lambda=X+Y-1.
\]

\begin{lemma}\label{lem:tree-identity}
For every binary word $v$,
\[
  B_v^0+B_v^1=1+\Lambda I_v.
\]
\end{lemma}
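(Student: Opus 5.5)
The identity is the generating-function form of a counting fact about finite prefix-closed languages: every word of $D(v)$ has exactly two one-letter extensions, and each of them either stays in $D(v)$ or lies on the boundary. I will prove it directly for any finite prefix-closed $D$ containing $\eps$, and then specialize to $D=D(v)$.

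The basic double count concerns the set of pairs $(u,b)$ with $u\in D$ and $b\in\{0,1\}$. Each such pair gives the word $ub$, which lies either in $D$ or in $\partial_bD$.

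\begin{itemize}
\item Words $ub$ that stay in $D$: since $D$ is prefix-closed, every nonempty word of $D$ arises as such an extension in exactly one way (remove its last letter). So these are exactly $D\setminus\{\eps\}$.
\item Words $ub$ that leave $D$: by definition these are exactly $\partial_0D\mathbin{\dot\cup}\partial_1D$, and they are disjoint from $D$.
\end{itemize}

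Counting both sides with weights gives
\[
  \sum_{u\in D}\omega(u)(X+Y)=\bigl(I-1\bigr)+B^0+B^1,
\]
because $\omega(u0)=X\omega(u)$ and $\omega(u1)=Y\omega(u)$, and the empty word contributes $\omega(\eps)=1$. Rearranging yields $B^0+B^1=1+(X+Y-1)I=1+\Lambda I$. For $D=D(v)$, the language is nonempty, contains $\eps$, and is prefix-closed, as the paper already notes.

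An alternative route is induction on $|v|$ via \cref{lem:boundary-recursion}.

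\begin{itemize}
\item The column sums of $A_0$ are $X+Y$ and $1$, so $B_{v0}^0+B_{v0}^1=(X+Y)B_v^0+B_v^1$.
\item For $I$, $D(v0)=D(v)\mathbin{\dot\cup}\partial_0D(v)$, which gives $I_{v0}=I_v+B_v^0$.
\item These two facts reduce the identity to $\Lambda B_v^0=(X+Y-1)B_v^0$, which is trivial.
\end{itemize}

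Both routes work, and I would present the direct double-counting one.

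The main point needing care is that each word of $D\setminus\{\eps\}$ is counted exactly once, and that no boundary word is also in $D$. The first follows from prefix-closure together with uniqueness of the last letter. The second is the disjointness $\partial_bD(v)\cap D(v)=\emptyset$ stated earlier.
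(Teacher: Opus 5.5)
Your proof is correct and is essentially the paper's argument. The paper builds the full binary tree whose internal nodes are the words of $D(v)$ and whose leaves are $\partial_0D(v)\mathbin{\dot\cup}\partial_1D(v)$, and summing the increments $\Lambda\omega(u)$ over all expansions is the same computation as your static double count of the one-letter extensions $ub$ with $u\in D(v)$; you make explicit the ``exactly once'' and disjointness facts that the paper leaves to ``it can be seen'' (and your inductive alternative via the column sums of $A_b$ and $D(vb)=D(v)\mathbin{\dot\cup}\partial_bD(v)$ is also valid).
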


\begin{proof}
We generate a finite full rooted binary tree as follows. Label the root as $\eps$.
The weight of a node with label $u$ is defined as $\omega(u)$.
If the label $w$ of a node belongs to $D(v)$, expand the node by generating
a left and a right child, whose labels are $w0$ and $w1$ respectively. Then it
can be seen that the labels of internal nodes and the leaves are exactly $D(v)$ and
$\partial_0D(v)\mathbin{\dot\cup}\partial_1D(v)$ respectively.

When a node of weight $\omega(u)$ is expanded, the weight of leaves is increased by
\[
  X\omega(u)+Y\omega(u)-\omega(u)=\Lambda\omega(u).
\]
Starting from the root with weight $1$ and expanding every node in
$D(v)$ proves the identity.

%{
%\textcolor{red}{Label each node as a word. The label of the left child is obtained by appending a $0$ in the label of the father node and the label of the right child is obtained by appending a $1$. If a label $w\in D(v)$, expand the node by generating its left and right child. The tree is generated by starting with the root node labeled as $\eps$. Then the tree is a full rooted binary tree and the leaves are exactly $\partial_0D(v)\mathbin{\dot\cup}\partial_1D(v)$. }
%}

\end{proof}

We use the lexicographic order on $\bits$ determined by $0<1$, with a proper
prefix smaller than the word it prefixes.

\section{Cyclic addition}

For this section fix $k\ge2$ and $1\le t\le2^k-2$, and put $N=2^k-1$.
Multiplication by $2$ modulo $N$ cyclically rotates the $k$ binary digits and
preserves Hamming weight.  Applying the same rotation to both coordinates of
a pair is a bijection in the Tu-Deng count.  Since the $k$-digit word of
$t$ is neither constant $0$ nor constant $1$, we may therefore choose the
cyclic reading origin so that
\[
  t_1t_2\cdots t_k=10v
\]
for a word $v$ of length $k-2$.  The positions are read from least to most
significant in this rotated representation.

Given an original pair $(x,y)$, replace $x$ by the $k$-bit complement
\[
  A=N-x\in\{1,\ldots,N\}
\]
and put $B=y\in\{0,\ldots,N-1\}$.  Thus
\[
  \wt(x)=k-\wt(A).
\]
Consequently
\[
  x+y\equiv t\pmod N,
  \quad \wt(x)+\wt(y)<k
\]
is equivalent to
\[
  B\equiv A+t\pmod N,
  \quad \wt(B)-\wt(A)<0.
\]

A
\emph{cyclic carry solution} consists of words
$A=(a_1,\ldots,a_k)$ and $B=(b_1,\ldots,b_k)$ together with carries
$c_0,\ldots,c_k\in\{0,1\}$ such that $c_k=c_0$ and
\begin{equation}\label{eq:cyclic-carry}
  a_i+t_i+c_{i-1}=b_i+2c_i
  \qquad(1\le i\le k).
\end{equation}
Summing \eqref{eq:cyclic-carry} with powers of $2$ gives
\begin{equation}\label{eq:numeric-carry}
  B=A+t-c_0N.
\end{equation}
Conversely, if \eqref{eq:numeric-carry} holds, then
$A+t+c_0=B+c_0 2^k$; ordinary binary addition with initial carry $c_0$
therefore has output $B$ and final carry $c_0$, and constructs the required
cyclic carry solution.

\begin{lemma}\label{lem:carry-uniqueness}
Fix $k$-bit words $A,B$ and suppose that $B\equiv A+t\pmod N$.  There is at
most one cyclic carry solution with word components $A,B$.  More precisely,
the initial carry $c_0$ and all subsequent carries $c_1,\ldots,c_k$ are
uniquely determined by $A,B$.
\end{lemma}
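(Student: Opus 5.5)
The argument has two parts. First, $c_0$ is determined by the numerical identity \eqref{eq:numeric-carry}. Second, once $c_0$ is known, the remaining carries are forced by the recursion \eqref{eq:cyclic-carry}.

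\textbf{Step 1 (the initial carry).} Suppose a cyclic carry solution with word components $A,B$ exists. Summing \eqref{eq:cyclic-carry} with weights $2^{i-1}$ gives \eqref{eq:numeric-carry}, so $B=A+t-c_0N$ as integers. Here $A$ and $B$ are the integers read from the words, and $t$ is the fixed integer. Hence
\[
 c_0N=A+t-B .
\]
The right-hand side depends only on $A$, $B$ and $t$, and $N\ge3>0$. So $c_0=(A+t-B)/N$ is uniquely determined. If this quotient is not $0$ or $1$, no solution exists, which is consistent with the claim of \emph{at most} one solution. The hypothesis $B\equiv A+t\pmod N$ only guarantees that the quotient is an integer. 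The key point is that the equation holds over $\mathbb{Z}$, not merely modulo $N$, so the value of $c_0$ cannot be ambiguous.

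\textbf{Step 2 (the subsequent carries).} With $c_0$ fixed, induct on $i$. If $c_{i-1}$ is known, \eqref{eq:cyclic-carry} gives
\[
 2c_i=a_i+t_i+c_{i-1}-b_i .
\]
This determines $c_i$ uniquely from $a_i,t_i,b_i,c_{i-1}$. Equivalently, $c_i=\lfloor (a_i+t_i+c_{i-1})/2\rfloor$ and $b_i$ is the parity, but for uniqueness only the linear solve is needed. Thus $c_1,\dots,c_k$ are determined, and the closing condition $c_k=c_0$ is a constraint to be checked, not a further choice. Therefore the whole tuple $(A,B,c_0,\dots,c_k)$ is determined by $(A,B)$, so there is at most one cyclic carry solution.

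\textbf{Main obstacle.} The only delicate point is Step 1. One might worry that $c_0=0$ and $c_0=1$ both work, for instance when $A=N$ (which the complement substitution permits, since $A\in\{1,\dots,N\}$). In that case $A\equiv 0$, and one may wonder whether some $B$ admits two representations. The integer identity settles this: the two values of $c_0$ would give values of $B$ differing by exactly $N$, and the same word $B$ cannot have two different integer values. So no extra case analysis is needed beyond reading \eqref{eq:numeric-carry} as an equality of integers.
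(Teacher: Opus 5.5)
Your proof is correct and follows the paper's argument: $c_0$ is forced by reading \eqref{eq:numeric-carry} as the integer identity $c_0N=A+t-B$, and the remaining carries are then determined step by step through \eqref{eq:cyclic-carry}. The only difference is that the paper also uses the bound $-N<A+t-B<2N$ to show the quotient lies in $\{0,1\}$; as you observe, that bound is not needed for the ``at most one'' claim itself.
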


\begin{proof}
Since $0\le A,B\le N$ and $1\le t\le N-1$,
\[
  -N<A+t-B<2N.
\]
The integer $A+t-B$ is also divisible by $N$, and hence
\[
  A+t-B\in\{0,N\}.
\]
Equation \eqref{eq:numeric-carry} therefore uniquely determines
\[
  c_0=\frac{A+t-B}{N}\in\{0,1\}.
\]
Once $c_0$ is fixed, ordinary binary addition from least to most significant
bit is deterministic: at each step, $a_i,t_i,c_{i-1}$ uniquely determine the
output bit $b_i$ and the next carry $c_i$.  Thus all carries are unique.
\end{proof}

\begin{lemma}\label{lem:endpoints}
The original Tu-Deng count equals the number of cyclic carry solutions for
which $\wt(B)-\wt(A)<0$.
\end{lemma}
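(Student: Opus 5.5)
The plan is to build an explicit bijection between the pairs $(x,y)$ counted in \cref{thm:main}, taken after the cyclic rotation that puts $t$ in the form $10v$, and the cyclic carry solutions with $\wt(B)-\wt(A)<0$. The map is the complementing change of variables $(x,y)\mapsto(A,B)=(N-x,y)$ described above, followed by attaching carries. Because the rotation is a bijection on pairs and preserves weights, it is enough to work with the rotated $t$ throughout.

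\emph{Step 1: pairs to word pairs.} As shown before \cref{lem:carry-uniqueness}, $(x,y)$ satisfies $x+y\equiv t\pmod N$ and $\wt(x)+\wt(y)<k$ exactly when $(A,B)$ satisfies $B\equiv A+t\pmod N$ and $\wt(B)-\wt(A)<0$. Here $A\in\{1,\ldots,N\}$ and $B\in\{0,\ldots,N-1\}$. The map $x\mapsto N-x$ is a bijection from $\{0,\ldots,N-1\}$ onto $\{1,\ldots,N\}$. So the Tu-Deng count equals the number of $k$-bit word pairs $(A,B)$ with $1\le A\le N$, $0\le B\le N-1$, the congruence, and the weight condition.

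\emph{Step 2: word pairs to carry solutions.} The word components $A,B$ of a cyclic carry solution range over all $k$-bit words, that is, $0\le A,B\le N$. So I must check that the weight condition rules out the two extra endpoint values.
\begin{itemize}
\item If $A=0$, then $\wt(B)-\wt(A)=\wt(B)\ge0$.
\item If $B=N$ is the all-ones word, then $\wt(B)=k\ge\wt(A)$.
\end{itemize}
Hence every cyclic carry solution with $\wt(B)-\wt(A)<0$ automatically has $A\ne0$ and $B\ne N$, which matches the ranges in Step 1.

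\emph{Step 3: existence and uniqueness of carries.} For any word pair $(A,B)$ with $B\equiv A+t\pmod N$, the proof of \cref{lem:carry-uniqueness} shows that $A+t-B\in\{0,N\}$. Setting $c_0=(A+t-B)/N$ gives \eqref{eq:numeric-carry}. The converse remark after \eqref{eq:numeric-carry} then produces a cyclic carry solution with these word components, and \cref{lem:carry-uniqueness} says this solution is unique. Conversely, every cyclic carry solution satisfies $B\equiv A+t\pmod N$ by \eqref{eq:numeric-carry}.

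Therefore forgetting the carries is a bijection from cyclic carry solutions with $\wt(B)-\wt(A)<0$ onto the word pairs counted in Step 1, which proves the lemma. The only delicate point is the endpoint bookkeeping in Step 2: the solution set allows $A=0$ and $B=N$, while the original ranges do not, and the strict inequality is exactly what excludes these cases. The rest is direct application of the earlier results.
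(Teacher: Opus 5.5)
Your proposal is correct and follows the paper's proof. You complement $x\mapsto N-x$, check that the strict weight condition excludes the endpoint words $A=0^k$ and $B=1^k$, and use \cref{lem:carry-uniqueness} together with the converse remark after \eqref{eq:numeric-carry} to pass between word pairs and carry solutions. The only difference is minor: you exclude the endpoints with the trivial bounds $\wt(B)\ge 0$ and $\wt(A)\le k$, whereas the paper computes $B=t$ and $A=N-t$ and obtains the difference $\wt(t)>0$. Your version is slightly simpler and equally valid.
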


\begin{proof}
The complementing map above sends each original pair to a cyclic carry
solution with $A\ne0^k$ and $B\ne1^k$, and it preserves the strict weight
condition in the displayed form.  Conversely, every cyclic solution with
$A\ne0^k$ and $B\ne1^k$ gives an original pair by $x=N-A$ and $y=B$.

It remains only to check that the two excluded endpoint representations can
never have negative weight difference.  If $A=0^k$, then
\eqref{eq:numeric-carry} forces $c_0=0$ and $B=t$, so
$\wt(B)-\wt(A)=\wt(t)>0$.  If $B=1^k$, then $B=N$ and
\eqref{eq:numeric-carry} forces $c_0=0$ and $A=N-t$; hence
\[
  \wt(B)-\wt(A)=k-(k-\wt(t))=\wt(t)>0.
\]
Thus restricting to the negative side automatically removes exactly the
nonstandard endpoint solutions.  By \cref{lem:carry-uniqueness}, each pair
$(A,B)$ corresponds to at most one cyclic carry solution, so no additional
multiplicity is hidden in this count.
\end{proof}

\begin{lemma}\label{lem:greedy-subsequence}
Let $q$ and $u=u_1\cdots u_m$ be finite words.  Scan the letters of $u$ in
order, matching each $u_j$ to the earliest equal letter of $q$ strictly after
the preceding matched position.  This greedy procedure matches all of $u$ if
and only if $u\preceq q$.
\end{lemma}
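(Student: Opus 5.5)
We prove the two directions separately, using the standard exchange argument for greedy embeddings. Write $q=q_1\cdots q_n$. Let $p_1<p_2<\cdots$ be the positions produced by the greedy scan, with $p_0=0$. Thus $p_j$ is the least index $p>p_{j-1}$ with $q_p=u_j$, provided such an index exists; otherwise the procedure fails at step $j$.

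\emph{Only if.} Suppose greedy matches all of $u$. Then $1\le p_1<\cdots<p_m\le n$ and $q_{p_j}=u_j$ for every $j$. So $u=q_{p_1}\cdots q_{p_m}$ is a subsequence of $q$, that is, $u\preceq q$.

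\emph{If.} Suppose $u\preceq q$, witnessed by indices $r_1<\cdots<r_m$ with $q_{r_j}=u_j$. We show by induction on $j$ that greedy succeeds at step $j$ and that $p_j\le r_j$.

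For the base case, $r_1$ is an index $p>0$ with $q_p=u_1$. So the set of candidates at step $1$ is nonempty, and its minimum satisfies $p_1\le r_1$.

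For the inductive step, assume $p_{j-1}\le r_{j-1}$. Then $r_j>r_{j-1}\ge p_{j-1}$ and $q_{r_j}=u_j$. Hence $r_j$ is a candidate at step $j$. The candidate set is therefore nonempty, greedy does not fail, and its minimal choice gives $p_j\le r_j$.

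After $m$ steps every letter of $u$ has been matched. This proves the equivalence. The empty word is matched trivially, and $\eps\preceq q$ always holds.

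The only delicate point is the invariant $p_j\le r_j$, the claim that greedy stays weakly ahead of any valid embedding. Once that invariant is set up, each inductive step is a one-line comparison. No earlier result of the paper is needed.
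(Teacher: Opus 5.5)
Your proof is correct and follows essentially the same argument as the paper: the greedy positions themselves give an embedding, and conversely induction shows the greedy choice $p_j$ never exceeds the position $r_j$ of a given embedding, so a candidate always exists. You spell out slightly more explicitly that this nonempty candidate set is what prevents the greedy procedure from failing, but the idea is identical.
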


\begin{proof}
If the greedy procedure succeeds, its chosen positions are a subsequence
embedding.  Conversely, suppose that an embedding exists at positions
\[
  1\le e_1<\cdots<e_m\le |q|.
\]
If $g_1,g_2,\ldots$ are the greedy positions, then induction gives
$g_j\le e_j$.  The assertion is clear for $j=1$.  If
$g_{j-1}\le e_{j-1}$, then $e_j>e_{j-1}\ge g_{j-1}$ is feasible at step
$j$, so the earliest feasible position satisfies $g_j\le e_j$.  Hence the
greedy procedure cannot fail whenever an embedding exists.
\end{proof}

Call a position $i$ \emph{active} when $c_{i-1}\ne t_i$.  At an inactive
position one has $b_i=a_i$ and $c_i=c_{i-1}$.  At an active position define
\[
  \xi=1-a_i\in\{0,1\}.
\]
A direct check of \eqref{eq:cyclic-carry} gives
\begin{equation}\label{eq:active-table}
  b_i-a_i=2\xi-1,
  \qquad
  c_i=1-\xi.
\end{equation}
After this position, the carry remains $1-\xi$ until the first later digit of
$t$ equal to $\xi$; that position is the next active one.  Thus the successive
active choices execute exactly the earliest-position greedy matching from
\cref{lem:greedy-subsequence} on the unscanned suffix, and the last choice is
the first symbol that cannot be matched.

For a polynomial $F=\sum_{a,b}f_{a,b}X^aY^b$, possibly with signed
coefficients, define the strict negative half-plane functional
\begin{equation}\label{eq:phi-def}
  \PhiNeg(F)=\sum_{a>b} f_{a,b}\,2^{-a-b}.
\end{equation}
Finally put
\begin{equation}\label{eq:C-def}
  C_v=B_{0v}^1+B_v^0.
\end{equation}

\begin{proposition}\label{prop:first-exit}
Let $S_{t,k}$ denote the set in \cref{thm:main}.  With the above rotation
$t=10v$,
\[
  \frac{|S_{t,k}|}{2^k}=\PhiNeg(C_v).
\]
\end{proposition}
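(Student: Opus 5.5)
By \cref{lem:endpoints}, $|S_{t,k}|$ is the number of cyclic carry solutions with $\wt(B)-\wt(A)<0$. The plan is to parametrize these solutions by the initial carry $c_0$ together with the word of \emph{active choices}. I would then show that the admissible choice words are exactly $\partial_1D(0v)$ when $c_0=0$ and $\partial_0D(v)$ when $c_0=1$. The weight difference and the number of compatible $A$'s would then be read off from $\omega(u)$ and $|u|$.

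\textbf{Step 1 (parametrization).} Fix $c_0$. The recursion \eqref{eq:cyclic-carry} runs deterministically once the bits $a_1,\dots,a_k$ are chosen. At an inactive position $a_i$ is free, $b_i=a_i$, and the carry is unchanged. At an active position, $\xi=1-a_i$ is free and \eqref{eq:active-table} holds. Record the successive active choices as a word $u=\xi_1\cdots\xi_m$. Then $A$ is determined by $u$ and by its $k-m$ free inactive bits, and $B$ is then forced. By \eqref{eq:active-table},
\[
\wt(B)-\wt(A)=\sum_j(2\xi_j-1)=\oo(u)-\zz(u),
\]
so the negativity condition is exactly that the $X$-degree of $\omega(u)$ exceeds its $Y$-degree. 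The only remaining constraint is the cyclic closure $c_k=c_0$. By \cref{lem:carry-uniqueness}, distinct data $(c_0,A)$ satisfying closure give distinct pairs, so nothing is double counted. Each admissible $u$ therefore contributes $2^{k-m}$ solutions, and
\[
|S_{t,k}|/2^k=\sum_{c_0}\ \sum_{u\ \text{admissible}}2^{-|u|}\,[\zz(u)>\oo(u)].
\]

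\textbf{Step 2 (admissible words as boundaries).} After a choice $\xi$ the carry is $1-\xi$. By the remark after \eqref{eq:active-table}, the next active position is the first later digit of $t$ equal to $\xi$. So $\xi_1\cdots\xi_{m-1}$ is matched greedily in the portion of $t$ after the first active position. The word terminates when $\xi_m$ finds no further match, and the carry then stays $1-\xi_m$ up to $c_k$. By \cref{lem:greedy-subsequence}, this says $\xi_1\cdots\xi_{m-1}\preceq q$ and $\xi_1\cdots\xi_m\not\preceq q$, where $q$ is the scanned suffix.

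The two values of $c_0$ are handled separately, using $t=10v$.
\begin{itemize}
\item If $c_0=0$, position $1$ is active since $t_1=1$, so $q=t_2\cdots t_k=0v$. Closure $c_k=0$ forces $\xi_m=1$, so $u\in\partial_1D(0v)$.
\item If $c_0=1$, position $1$ is inactive and position $2$ is active since $t_2=0$, so $q=v$. Closure $c_k=1$ forces $\xi_m=0$, so $u\in\partial_0D(v)$.
\end{itemize}
In both cases there is at least one active position, so $m\ge1$. Conversely, every boundary word is realized by this run of the recursion.

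\textbf{Step 3 (conclusion).} Combining Steps 1 and 2, the right-hand side is $\PhiNeg(B^1_{0v})+\PhiNeg(B^0_v)$. Since $\partial_1D(0v)$ and $\partial_0D(v)$ index disjoint solution sets (different $c_0$), this equals $\PhiNeg(C_v)$ by \eqref{eq:C-def} and \eqref{eq:phi-def}.

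The main obstacle is Step 2: the bookkeeping that identifies the stopping rule and the closure $c_k=c_0$ exactly with the directional boundary. Off-by-one issues arise in which suffix is scanned and in the role of the first active position, and these are where errors would hide. I would check small cases, such as $v=\eps$ with $k=2$, against the direct count to confirm the indexing.
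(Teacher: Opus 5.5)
Your proposal is correct and follows the paper's proof essentially step for step. You parametrize solutions by $c_0$ and the active-choice word, use the greedy-matching lemma plus cyclic closure $c_k=1-\xi_m=c_0$ to identify the admissible words with $\partial_1D(0v)$ (for $c_0=0$) and $\partial_0D(v)$ (for $c_0=1$), and weight each word $u$ by its $2^{k-|u|}$ free inactive bits.

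Your converse, that every boundary word is realized, is asserted as briefly as in the paper. It follows by placing the active positions through the greedy matching of $u^-$, and noting that the failed match of the last letter keeps the carry at $1-\xi_m$ through position $k$.
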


\begin{proof}
First suppose $c_0=0$.  Since $t_1=1$, the first position is active.  Let
$\xi_1\cdots\xi_m$ be the successive active choices.  By the observation
after \eqref{eq:active-table} and \cref{lem:greedy-subsequence}, the proper
prefix $\xi_1\cdots\xi_{m-1}$ is a subsequence of the remaining word $0v$,
while the full word is not.  Cyclic closure gives
$c_k=1-\xi_m=c_0=0$, so $\xi_m=1$.  Hence the active-choice word belongs to
$\partial_1D(0v)$.

If $c_0=1$, the first digit $t_1=1$ is inactive and the second digit
$t_2=0$ is active.  The same argument, now with remaining word $v$, shows
that the active-choice word belongs to $\partial_0D(v)$, because cyclic
closure requires $\xi_m=0$.

Conversely, by \cref{lem:greedy-subsequence}, the proper prefix of each word
in the indicated directional boundary is matched successfully by the greedy
procedure, whereas its final letter necessarily fails.  The earliest matched
positions are unique, and hence uniquely determine all active positions and
satisfy the required cyclic closure.  At its
$m$ active positions the bits of $A$ are fixed by $a_i=1-\xi_i$; every one
of the other $k-m$ inactive bits of $A$ is arbitrary.  Therefore a boundary
word of length $m$ represents exactly $2^{k-m}$ cyclic carry solutions.
Moreover, by \eqref{eq:active-table},
\[
  \wt(B)-\wt(A)
  =\sum_{j=1}^m(2\xi_j-1)
  =\oo(\xi_1\cdots\xi_m)-\zz(\xi_1\cdots\xi_m).
\]
Thus the negative half-plane solutions counted in \cref{lem:endpoints}, divided by
$2^k$, are exactly the terms selected by \eqref{eq:phi-def} from
$B_{0v}^1+B_v^0$.
\end{proof}

\section{The cross-boundary factorization}

We next factor the polynomial $C_v$ from \eqref{eq:C-def}.  Set
\[
  e=\col{-1}{1},
  \qquad
  d_v=X^{\zz(v)+1}Y^{\oo(v)},
  \qquad
  s_v=Yd_v=X^{\zz(v)+1}Y^{\oo(v)+1}.
\]

\begin{lemma}\label{lem:positive-lift}
There are polynomials $U_v,H_v$ with nonnegative integer coefficients such
that
\begin{equation}\label{eq:delta-decomp}
  B_{0v}-B_v
  =s_ve+\Lambda\col{U_v+d_v}{H_v}.
\end{equation}
They are determined by
\[
  (U_\eps,H_\eps,d_\eps)=(0,0,X)
\]
and the recurrences
\begin{align}
 (U_{v0},H_{v0},d_{v0})
   &=(XU_v,\ H_v+YU_v,\ Xd_v),
   \label{eq:lift-zero}\\
 (U_{v1},H_{v1},d_{v1})
   &=(U_v+XH_v+d_v,\ YH_v,\ Yd_v).
   \label{eq:lift-one}
\end{align}
\end{lemma}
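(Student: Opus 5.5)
The plan is to prove \eqref{eq:delta-decomp} by induction on $|v|$, taking the recurrences \eqref{eq:lift-zero}--\eqref{eq:lift-one} as the \emph{definition} of $U_v,H_v,d_v$. Nonnegativity of the coefficients of $U_v,H_v$ then follows at once, since the recurrences only add and multiply by $X,Y$, starting from $U_\eps=H_\eps=0$. The same induction confirms that $d_v=X^{\zz(v)+1}Y^{\oo(v)}$ agrees with the earlier definition, because each appended letter multiplies $d$ by $X$ or by $Y$; consequently $s_v=Yd_v$ throughout.

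\emph{Base case.} By \cref{lem:boundary-recursion}, $B_0=A_0B_\eps=(X^2,\ XY+Y)^{T}$ and $B_\eps=(X,Y)^{T}$, so $B_0-B_\eps=(X^2-X,\ XY)^{T}$. On the right-hand side, $s_\eps=XY$ and $\Lambda(U_\eps+d_\eps)=\Lambda X=X^2+XY-X$. Hence the right side is $(-XY+X^2+XY-X,\ XY)^{T}$, which matches.

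\emph{Inductive step.} Both $0vb$ and $vb$ arise by appending $b$. By \cref{lem:boundary-recursion}, $B_{0vb}-B_{vb}=A_b(B_{0v}-B_v)$, so it suffices to apply $A_b$ to the right side of \eqref{eq:delta-decomp} and re-express the result in the same shape for $vb$. The key computations are $A_0e=(-X,\ 1-Y)^{T}$ and $A_1e=(X-1,\ Y)^{T}$.

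For $b=0$ we use $s_{v0}=Xs_v$. Then
\[
A_0s_ve-s_{v0}e=s_v(0,\ 1-X-Y)^{T}=-\Lambda s_v(0,1)^{T}.
\]
Also $A_0\Lambda(U_v+d_v,H_v)^{T}=\Lambda\bigl(X(U_v+d_v),\ Y(U_v+d_v)+H_v\bigr)^{T}$. Adding the two pieces, the second component becomes $YU_v+H_v+Yd_v-s_v=YU_v+H_v$, using $s_v=Yd_v$. The first component is $XU_v+d_{v0}$. This is exactly \eqref{eq:lift-zero}.

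For $b=1$ we use $s_{v1}=Ys_v$. Then
\[
A_1s_ve-s_{v1}e=\Lambda s_v(1,0)^{T},
\]
and $A_1\Lambda(U_v+d_v,H_v)^{T}=\Lambda(U_v+d_v+XH_v,\ YH_v)^{T}$. The first component totals $U_v+XH_v+d_v+s_v$. Since $s_v=Yd_v=d_{v1}$, this equals $U_{v1}+d_{v1}$, matching \eqref{eq:lift-one}.

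The only delicate point is the bookkeeping of the $\Lambda$-multiple of $s_v$ produced when $A_b$ acts on $e$. It must be absorbed using the identity $s_v=Yd_v$: into $H$ when $b=0$, where it cancels, and into $U+d$ when $b=1$, where it becomes the new $d_{v1}$. Everything else is routine matrix multiplication.
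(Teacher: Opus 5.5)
Your proof is correct and follows essentially the same route as the paper. Both arguments induct via $B_{0vb}-B_{vb}=A_b(B_{0v}-B_v)$, verify the base case $(X^2-X,\ XY)^{T}$, and absorb the $\pm\Lambda s_v$ term produced by $A_be$ using $s_v=Yd_v$: it cancels in the $H$-coordinate for $b=0$ and becomes $d_{v1}$ for $b=1$.
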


\begin{proof}
Write $\delta_v=B_{0v}-B_v$.  Since both words receive the same appended
letter $b$,
\[
  \delta_{vb}=A_b\delta_v.
\]
At the empty word,
\[
 \delta_\eps
 =B_0-B_\eps
 =\col{X^2-X}{XY}
 =XYe+\Lambda\col{X}{0},
\]
which is \eqref{eq:delta-decomp}.  Direct multiplication gives
\[
  A_0e=Xe-\Lambda\col{0}{1},
  \qquad
  A_1e=Ye+\Lambda\col{1}{0}.
\]
Assuming \eqref{eq:delta-decomp} for $v$, apply $A_0$ and use $s_v=Yd_v$:
\[
 \delta_{v0}
 =Xs_ve+\Lambda
   \col{XU_v+Xd_v}{H_v+YU_v}.
\]
This is \eqref{eq:delta-decomp} with \eqref{eq:lift-zero}.  Similarly,
\[
 \delta_{v1}
 =Ys_ve+\Lambda
   \col{U_v+XH_v+d_v+Yd_v}{YH_v},
\]
which is \eqref{eq:delta-decomp} with \eqref{eq:lift-one}.  The displayed
recurrences preserve nonnegative integer coefficients.
\end{proof}

Taking the second coordinate in \eqref{eq:delta-decomp} and using
\cref{lem:tree-identity} gives the desired factorization.

\begin{corollary}\label{cor:cross-factor}
If
\[
  J_v=I_v+H_v,
\]
then
\begin{equation}\label{eq:cross-factor}
  C_v=1+\Lambda J_v+s_v.
\end{equation}
\end{corollary}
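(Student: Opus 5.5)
The plan is to read off the second coordinate of \eqref{eq:delta-decomp} in \cref{lem:positive-lift} and then combine it with \cref{lem:tree-identity}. The second coordinate of $e$ is $1$, so the second coordinate of \eqref{eq:delta-decomp} gives
\[
  B_{0v}^1-B_v^1=s_v+\Lambda H_v .
\]
Adding $B_v^0+B_v^1$ to both sides turns the left-hand side into $B_{0v}^1+B_v^0=C_v$, by \eqref{eq:C-def}. We therefore obtain
\[
  C_v=B_v^0+B_v^1+s_v+\Lambda H_v .
\]

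Next, \cref{lem:tree-identity} replaces $B_v^0+B_v^1$ by $1+\Lambda I_v$. This yields
\[
  C_v=1+\Lambda(I_v+H_v)+s_v=1+\Lambda J_v+s_v,
\]
which is \eqref{eq:cross-factor}.

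Every step is a direct substitution of already established identities, so there is no real obstacle. The only points to check are bookkeeping. First, the second component of $e=\col{-1}{1}$ is $+1$, not $-1$. Second, the second component of the $\Lambda$-vector in \eqref{eq:delta-decomp} is $H_v$, not $U_v+d_v$. Third, \cref{lem:tree-identity} is applied to $v$ itself and not to $0v$.
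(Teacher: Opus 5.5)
Your proposal is correct and is the paper's proof: you take the second coordinate of \eqref{eq:delta-decomp} to get $B_{0v}^1-B_v^1=s_v+\Lambda H_v$, rewrite $C_v$ as $(B_v^0+B_v^1)+s_v+\Lambda H_v$, and substitute \cref{lem:tree-identity}. Your bookkeeping checks are also accurate.
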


\begin{proof}
The second coordinate of \eqref{eq:delta-decomp} is
\[
  B_{0v}^1-B_v^1=s_v+\Lambda H_v.
\]
Therefore
\[
 C_v
 =B_v^0+B_{0v}^1
 =(B_v^0+B_v^1)+s_v+\Lambda H_v
 =1+\Lambda(I_v+H_v)+s_v.
\]
\end{proof}

\section{A lexicographic language model for $H_v$}

The auxiliary polynomial $H_v$ has an explicit word-language
interpretation.  Define
\begin{equation}\label{eq:H-language}
  \mathcal H(v)
  =\{y\in\partial_1D(v):y\lexlt v\}.
\end{equation}

For a word $w$, let $R_w^b$ be the weighted enumerator of those words in
$\partial_bD(w)$ that begin with $0$.  The set decompositions in \cref{lem:boundary-recursion} preserve the first
letter of every boundary word.  Hence the vector
$R_w=(R_w^0,R_w^1)^T$ obeys the same recurrences as $B_w$,

\[
  R_{w0}=A_0R_w,
  \qquad
  R_{w1}=A_1R_w,
  \qquad
  R_\eps=\col{X}{0}.
\]

Let $w=(w_1,w_2,\cdots, w_k)\in\{0,1\}^k$, then
\begin{equation}\label{eq:R_rev}
R_w=A_{w_k}A_{w_{k-1}}\cdots A_{w_1}R_{\eps}.
\end{equation}

%but with initial vector
%\[
%  R_\eps=\col{X}{0}.
%\]
For a position $i$ in a word $v$, write $v_{<i}$ and $v_{>i}$ for the strict
prefix and strict suffix around that position.

\begin{lemma}\label{lem:source-expansion}
The polynomial $H_v$ from \cref{lem:positive-lift} satisfies
\begin{equation}\label{eq:H-source}
  H_v=
  \sum_{i:v_i=1}
  \omega(v_{<i})R_{v_{>i}}^1.
\end{equation}
\end{lemma}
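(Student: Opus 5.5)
Let $\widetilde H_v$ denote the right-hand side of \eqref{eq:H-source}. The plan is to show that $\widetilde H_v$ satisfies the same initial value and the same recurrences \eqref{eq:lift-zero}--\eqref{eq:lift-one} as $H_v$. Because those recurrences also involve $U_v$, we also need a companion closed form for $U_v$. The natural guess, read off from \eqref{eq:lift-one}, is
\[
  U_v=\sum_{i:v_i=1}\omega(v_{<i})R^0_{v_{>i}} .
\]
At a $1$ appended at the end, the new summand contributes $\omega(v)R^0_\eps=\omega(v)X$, and this term should match the source term $d_v$ up to normalization.

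Since $d_v=X\omega(v)$, the source term is exactly $\omega(v)R^0_\eps$. So the guess is that the vector
\[
  \col{U_v}{H_v}=\sum_{i:v_i=1}\omega(v_{<i})R_{v_{>i}},
\]
with $R_w$ as in \eqref{eq:R_rev}. We prove this vector identity by induction on $|v|$. The base case $v=\eps$ gives an empty sum, which matches $(U_\eps,H_\eps)=(0,0)$.

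For the inductive step, append a letter $b$ to $v$. Each summand of the old sum becomes $\omega(v_{<i})R_{v_{>i}b}=\omega(v_{<i})A_bR_{v_{>i}}$, using $R_{wb}=A_bR_w$. Hence the old part transforms by $A_b$.

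\begin{itemize}
\item For $b=0$, there is no new summand. Then
\[
  A_0\col{U}{H}=\col{XU}{YU+H},
\]
which is exactly \eqref{eq:lift-zero}.
\item For $b=1$, there is a new summand at position $i=|v|+1$, namely $\omega(v)R_\eps=\col{X\omega(v)}{0}=\col{d_v}{0}$. Then
\[
  A_1\col{U}{H}+\col{d_v}{0}=\col{U+XH+d_v}{YH},
\]
which is exactly \eqref{eq:lift-one}.
\end{itemize}

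Taking the second coordinate gives \eqref{eq:H-source}.

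The only points needing care are these:
\begin{itemize}
\item the identity $d_v=X\omega(v)$, checked inductively from $d_\eps=X$, $d_{v0}=Xd_v$ and $d_{v1}=Yd_v$;
\item the orientation of the product in \eqref{eq:R_rev}, i.e.\ that appending a letter at the right of $w$ multiplies $R_w$ on the left by $A_b$. This holds because $R$ obeys the same recurrences as $B$.
\end{itemize}

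I expect the main obstacle to be finding the companion formula for $U_v$ so that the induction closes. The recurrences couple $U$ and $H$, and the statement alone cannot be inducted on. Once the vector form is guessed, the remaining verification is mechanical.
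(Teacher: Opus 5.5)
Your proof is correct and follows essentially the same route as the paper. The paper also treats $(U_v,H_v)^T$ as evolving under $A_0,A_1$, with the source $(d_{v_{<i}},0)^T=\omega(v_{<i})R_\eps$ injected at each $1$, so that position $i$ contributes $\omega(v_{<i})R_{v_{>i}}$ to the pair. Your induction on the full vector identity, including the companion formula for $U_v$, is a more explicit write-up of that superposition argument.
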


\begin{proof}
Ignoring the source term $d_v$ in \eqref{eq:lift-one}, the pair
$(U_v,H_v)^T$ evolves under the same matrices $A_0,A_1$ as the boundary
vector.  Each occurrence $v_i=1$ injects the source
$(d_{v_{<i}},0)^T$ into the $U$ coordinate.  Since
$(d_{v_{<i}},0)^T=(X\omega(v_{<i}),0)^T=\omega(v_{<i})R_{\eps}$ and $\omega(v_{<i})$ is a monomial,
it follows from \eqref{eq:R_rev} that propagation through the suffix $v_{>i}$
contributes
\[
 \omega(v_{<i})R_{v_{>i}}
\]
to the final pair.  Taking the second coordinate and summing over all source
positions gives \eqref{eq:H-source}.
\end{proof}

\begin{lemma}\label{lem:embedding-position}
Let
\[
  v=a1s,
  \qquad |a|=i-1,
\]
where the displayed $a$ is the literal prefix of $v$ and the letter at
position $i$ is $1$.  If a nonempty word $r$ begins with $0$ and
\[
  ar\preceq v,
\]
then
\[
  r\preceq s.
\]
\end{lemma}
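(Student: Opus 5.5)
The plan is to use the greedy embedding of \cref{lem:greedy-subsequence} applied to the word $ar$ inside $v=a1s$. The key observation is that the greedy procedure, run on the prefix $a$, matches $a$ exactly onto the literal prefix of $v$ of length $i-1$. This holds because when $q$ begins with $u$ itself, each letter $a_j$ is matched at position $j$. Indeed, by induction, after matching $a_1\cdots a_{j-1}$ at positions $1,\ldots,j-1$, the earliest position after $j-1$ carrying the letter $a_j$ is position $j$ itself. So after the greedy scan of $a$, the last matched position is $i-1$ (or no position at all if $a=\eps$).

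Next I invoke \cref{lem:greedy-subsequence} on the full word $ar$. Since $ar\preceq v$, the greedy procedure succeeds on all of $ar$. By the previous paragraph, it consumes exactly the positions $1,\ldots,i-1$ for $a$. It then matches $r$ greedily in the remaining suffix $v_{\ge i}=1s$. In particular, $r\preceq 1s$ via the greedy embedding.

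Finally I use that $r$ begins with $0$. The first letter $r_1=0$ is matched to the earliest $0$ strictly after position $i-1$. Position $i$ carries the letter $1$, so this match lies at a position strictly greater than $i$, that is, inside $s$. All subsequent letters of $r$ are then matched at even later positions, also inside $s$. Hence the greedy embedding of $r$ uses only positions of $s$, which gives $r\preceq s$.

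The only delicate point is the first step: one must justify that the greedy matching of $a$ lands exactly on the literal prefix, rather than merely on some embedding of $a$. Nonemptiness of $r$ is needed so that $r_1=0$ exists. The argument is otherwise a routine induction, and I expect no real obstacle.
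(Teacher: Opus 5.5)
Your proof is correct. Its decisive step is the same as the paper's: the first letter $0$ of $r$ is matched at a position $\ge i$, it cannot sit on $v_i=1$, so it lies strictly beyond $i$ and all of $r$ embeds in $s$.

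The difference is only in how you obtain ``position $\ge i$.'' You go through \cref{lem:greedy-subsequence} and first show that the greedy embedding places $a$ exactly on positions $1,\ldots,i-1$. The paper takes an \emph{arbitrary} embedding of $ar$ in $v$. It then uses the fact that matched positions strictly increase from $1$, so the $(|a|+1)$-st matched position is at least $|a|+1=i$.

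So the ``delicate point'' you flag is not needed. Any embedding already has the required property, and you never use the extra information that $a$ lands on the literal prefix. Your route is valid but heavier; the paper's avoids the greedy lemma entirely.
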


\begin{proof}
Take any embedding of $ar$ in $v$.  The position used for the first letter of
$r$ is at least $|a|+1=i$.  That letter is $0$, whereas $v_i=1$, so the
position is in fact strictly greater than $i$.  All later matched positions
are also greater than $i$, and hence the whole word $r$ embeds in the suffix
$s=v_{>i}$.
\end{proof}

\begin{proposition}\label{prop:H-lex}
\[
  H_v=\sum_{y\in\mathcal H(v)}\omega(y).
\]
Consequently, if
\begin{equation}\label{eq:L-language}
  \mathcal L(v)=D(v)\mathbin{\dot\cup}\mathcal H(v),
\end{equation}
then
\begin{equation}\label{eq:J-enumerator}
  J_v=\sum_{y\in\mathcal L(v)}\omega(y).
\end{equation}
\end{proposition}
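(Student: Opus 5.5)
We prove the formula for $H_v$ with the source expansion of \cref{lem:source-expansion}. The consequence \eqref{eq:J-enumerator} then follows at once from $J_v=I_v+H_v$ (\cref{cor:cross-factor}) and the disjointness $D(v)\cap\partial_1D(v)=\emptyset$.

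By \eqref{eq:H-source} it suffices to build a weight-preserving bijection between $\mathcal H(v)$ and the disjoint union, over positions $i$ with $v_i=1$, of the sets
\[
  \{\,r\in\partial_1D(v_{>i}) : r \text{ begins with } 0\,\},
\]
sending $r$ to $y=v_{<i}r$. Since $\omega(v_{<i}r)=\omega(v_{<i})\omega(r)$, this bijection gives \eqref{eq:H-source} term by term.

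The inverse map is the lexicographic decomposition. For $y\lexlt v$ with $y\in\partial_1D(v)$, note first that $y$ is not a proper prefix of $v$, since prefixes lie in $D(v)$. So $y$ and $v$ have a first disagreement at some position $i\le |y|$ with $i\le |v|$. Because $y<v$ there, we get $y_i=0$ and $v_i=1$. Write $a=v_{<i}$ and $y=ar$, where $r$ begins with $0$. Then $r\ne\eps$, and $r$ ends in $1$ because $y$ does. Write $r=r'1$.

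We now check $r\in\partial_1D(v_{>i})$.
\begin{itemize}
\item[(i)] $r'\preceq v_{>i}$: $ar'\in D(v)$ holds since $y\in\partial_1D(v)$. If $r'$ is nonempty it begins with $0$, so \cref{lem:embedding-position} applies. If $r'$ is empty, the claim is trivial. The case $|r|=1$ cannot occur, since $r$ begins with $0$ and ends with $1$.
\item[(ii)] $r\not\preceq v_{>i}$: otherwise $ar\preceq a1v_{>i}=v$, contradicting $y\notin D(v)$.
\end{itemize}

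Conversely, given $i$ with $v_i=1$ and $r=r'1\in\partial_1D(v_{>i})$ beginning with $0$, set $y=v_{<i}r$.
\begin{itemize}
\item $v_{<i}r'\preceq v_{<i}v_{>i}\preceq v$, so the prefix of $y$ lies in $D(v)$.
\item $y\notin D(v)$, by \cref{lem:embedding-position} applied to $r$.
\item $y\lexlt v$, because $y$ first differs from $v$ at position $i$, where $y$ has $0$ and $v$ has $1$.
\end{itemize}
The two maps are mutually inverse, because $i$ is recovered from $y$ as the first disagreement position with $v$.

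The main obstacle is step (i) of the forward direction: we must show that the truncated prefix $ar'$ embedding in $v$ forces $r'$ into the suffix $v_{>i}$. This relies on $r'$ starting with $0$ and on applying \cref{lem:embedding-position} to $r'$ rather than to $r$, and it requires separating out the degenerate case where $r'$ is empty. A second point to watch is ruling out that $y$ is a proper prefix of $v$, which the prefix-closedness of $D(v)$ handles.
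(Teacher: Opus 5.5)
Your proposal is correct and is essentially the paper's proof: you reduce to the source expansion \eqref{eq:H-source} and decompose $y\in\mathcal H(v)$ at its first disagreement with $v$. You apply \cref{lem:embedding-position} to the truncated word $r'$ to get membership of the prefix, and to $r$ itself to get $y\notin D(v)$, exactly as the paper does. The only differences are cosmetic: you treat the inverse direction first, and you explicitly set aside the empty-$r'$ case, which the paper dismisses by noting that $x$ begins with $0$ and ends with $1$.
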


\begin{proof}
The right side of \eqref{eq:H-source} enumerates pairs $(i,x)$ satisfying
\[
  v_i=1,
  \qquad
  x\in\partial_1D(v_{>i}),
  \qquad
  x\text{ begins with }0,
\]
with the word weight of
\[
  y=v_{<i}x.
\]
We show that $(i,x)\mapsto y$ is a weight-preserving bijection onto
$\mathcal H(v)$.

Let $a=v_{<i}$ and $s=v_{>i}$, and write $x=x^-1$.  Since
$x^-\preceq s$, the word
$y^-=ax^-$ is a subsequence of $v$: use the literal prefix $a$, skip the
letter $v_i=1$, and embed $x^-$ in $s$.  On the other hand, $y\not\preceq v$.  Otherwise, since $x$ is nonempty and
begins with $0$, applying \cref{lem:embedding-position} to
$y=ax\preceq v=a1s$ would give $x\preceq s$, contrary to
$x\not\preceq s$.  Hence $y\in\partial_1D(v)$.  Moreover $y$ and $v$ agree
before position $i$, while $y_i=0<1=v_i$, so $y\lexlt v$.

Conversely, let $y\in\partial_1D(v)$ with $y\lexlt v$, and let $i$ be the
first position at which $y$ and $v$ differ.  The case in which $y$ is a
proper prefix of $v$ is impossible because then $y\preceq v$.  Thus
$v_i=1$ and $y_i=0$.  Write $y=v_{<i}x$.  Since $x$ begins with $0$ and ends with $1$, its proper prefix $x^-$ is
nonempty and begins with $0$.  From
\[
  y^-=v_{<i}x^-\preceq v=v_{<i}1v_{>i},
\]
\cref{lem:embedding-position} gives $x^-\preceq v_{>i}$ directly.  If
$x\preceq v_{>i}$, then $y\preceq v$, a contradiction.  Hence
$x\in\partial_1D(v_{>i})$, and $x$ begins with $0$.

The first differing position $i$ is unique, so the correspondence is a
bijection.  The first identity follows from \eqref{eq:H-source}.  The union
in \eqref{eq:L-language} is disjoint because every word in
$\mathcal H(v)$ lies outside $D(v)$, and \eqref{eq:J-enumerator} follows
from $J_v=I_v+H_v$.
\end{proof}

The lexicographic language model gives the following degree bound.

\begin{lemma}\label{lem:one-degree}
If $o=\oo(v)$, then every word in $\mathcal L(v)$ has at most $o$ ones.
Equivalently,
\[
  \deg_YJ_v\le o.
\]
\end{lemma}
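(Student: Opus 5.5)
The statement concerns the language $\mathcal L(v)=D(v)\mathbin{\dot\cup}\mathcal H(v)$ from \cref{prop:H-lex}, so the plan is to bound the number of ones separately on the two pieces of the disjoint union. The inequality $\deg_YJ_v\le o$ then follows at once from \eqref{eq:J-enumerator}. The reason is that $J_v$ is a sum of monomials $\omega(y)=X^{\zz(y)}Y^{\oo(y)}$ with nonnegative coefficients, so no cancellation can occur, and $\deg_Y J_v=\max_{y\in\mathcal L(v)}\oo(y)$.

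For the first piece, take $y\in D(v)$. Then $y\preceq v$, and a subsequence cannot contain more ones than the word it embeds in. Hence $\oo(y)\le\oo(v)=o$.

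For the second piece, take $y\in\mathcal H(v)$, so that $y\in\partial_1D(v)$ and $y\lexlt v$. I will reuse the decomposition from the proof of \cref{prop:H-lex}. Let $i$ be the first position where $y$ and $v$ differ. That proof shows $v_i=1$ and $y=v_{<i}x$, where $x$ begins with $0$ and satisfies $x\in\partial_1D(v_{>i})$. Write $x=x^-1$ with $x^-\preceq v_{>i}$. Counting ones gives
\[
  \oo(y)=\oo(v_{<i})+\oo(x^-)+1\le \oo(v_{<i})+\oo(v_{>i})+1=\oo(v_{<i})+\oo(v_{>i})+\oo(v_i)=o.
\]
The key point is that the final $1$ of $y$ is paid for by the skipped letter $v_i=1$. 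This is exactly the mechanism of \cref{lem:embedding-position}: $y^-$ embeds in $v$ while avoiding position $i$.

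The only delicate step is justifying that the position $i$ exists and that $v_i=1$. The case where $y$ is a proper prefix of $v$ is excluded because $y\notin D(v)$. The case where $v$ is a proper prefix of $y$ is excluded because then $v\lexlt y$ under the stated order. So $i$ is well defined, and $y\lexlt v$ forces $y_i=0$ and $v_i=1$.

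Alternatively, one could argue directly from $y^-\preceq v$ without the decomposition. Since $y^-$ begins with the prefix $v_{<i}$ and then has $0$ at position $i$ (if $x^-$ is nonempty, which it is), any embedding of $y^-$ skips some $1$ of $v$. This gives $\oo(y^-)\le o-1$ and hence $\oo(y)\le o$. I would present the decomposition version, since it is already established in \cref{prop:H-lex}.
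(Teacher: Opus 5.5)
Your proposal is correct and matches the paper's proof: the bound is trivial on $D(v)$, and on $\mathcal H(v)$ you use the representation $y=v_{<i}x$, $x=x^-1$, $x^-\preceq v_{>i}$ from \cref{prop:H-lex}, so that $\oo(y)\le\oo(v_{<i})+\oo(v_{>i})+1=\oo(v)$. Your additional justifications are sound details the paper leaves implicit: no cancellation occurs in $J_v$ because its coefficients are nonnegative, and the first differing position $i$ exists with $v_i=1$.
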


\begin{proof}
The assertion is immediate for $D(v)$.  If $y\in\mathcal H(v)$, use the
unique representation in the proof of \cref{prop:H-lex}:
\[
  y=v_{<i}x,
  \qquad
  v_i=1,
  \qquad
  x=x^-1,
  \qquad
  x^-\preceq v_{>i}.
\]
Then
\[
 \oo(y)
 \le \oo(v_{<i})+\oo(v_{>i})+1
 =\oo(v).
\]
\end{proof}

\section{One-sided deletion and the half-plane bound}

The language $\mathcal L(v)$ is closed under one deletion of $1$.

\begin{lemma}\label{lem:lex-delete}
If a word $z$ is obtained from a word $y$ by deleting one occurrence of
$1$, then $z\lexlt y$.
\end{lemma}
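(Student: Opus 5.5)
The plan is to compare $z$ and $y$ letter by letter, starting at the deleted $1$, and to locate the first position where they differ. Write $y=p1q$, where the displayed $1$ is the deleted occurrence, so that $z=pq$. Both words begin with the common prefix $p$, so the comparison reduces to comparing $q$ with $1q$, under the stated convention that $0<1$ and that a proper prefix is smaller than any word it prefixes.

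Next we decompose $q$ according to its leading run of ones. Write $q=1^j r$, where $j\ge0$ is maximal, so that $r$ is either empty or begins with $0$. Then
\[
  z=p1^jr,\qquad y=p1^{j+1}r.
\]
The two words agree on the first $|p|+j$ letters, and at position $|p|+j+1$ the word $y$ has the letter $1$.

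There are two cases. If $r$ is empty, then $z=p1^j$ is a proper prefix of $y=p1^{j+1}$, so $z\lexlt y$ by the prefix convention. If $r$ is nonempty, it begins with $0$, so $z$ has the letter $0$ at position $|p|+j+1$ while $y$ has $1$ there; this is the first difference, and since $0<1$ we again get $z\lexlt y$.

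There is no real obstacle. The only point needing care is the empty-suffix case: there the first difference occurs after $z$ has ended, so it must be settled by the prefix rule of the lexicographic order rather than by comparing letters.
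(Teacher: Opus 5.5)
Your proof is correct and is essentially the paper's argument: you write $y=p1q$, split off the maximal leading run of ones in $q$, and treat the two cases (remaining suffix empty, giving a proper prefix; or beginning with $0$, giving a first difference $0<1$) exactly as the paper does with $B=1^m$ and $B=1^m0C$.
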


\begin{proof}
Write $y=A1B$ and $z=AB$.  If $B=1^m0C$, then after the common prefix
$A1^m$, the word $z$ has $0$ while $y$ has $1$.  If $B=1^m$, then $z$ is a
proper prefix of $y$.
\end{proof}

\begin{proposition}\label{prop:delete-one}
If $y\in\mathcal L(v)$ and one occurrence of $1$ is deleted from $y$, the
resulting word still belongs to $\mathcal L(v)$.
\end{proposition}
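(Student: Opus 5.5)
The plan is to argue by cases according to which part of the disjoint union $\mathcal L(v)=D(v)\mathbin{\dot\cup}\mathcal H(v)$ contains $y$. The tools are the definition \eqref{eq:H-language} of $\mathcal H(v)$ and \cref{lem:lex-delete}. Throughout, let $z$ be the word obtained from $y$ by deleting one occurrence of $1$.

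\emph{Case $y\in D(v)$.} Deleting a letter produces a subsequence, so $z\preceq y\preceq v$. Since $\preceq$ is transitive, $z\in D(v)\subseteq\mathcal L(v)$.

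\emph{Case $y\in\mathcal H(v)$.} By definition $y\in\partial_1D(v)$, so $y=u1$ with $u\in D(v)$, $y\notin D(v)$, and $y\lexlt v$. I would split on which $1$ is deleted.
\begin{enumerate}
\item If the deleted $1$ is the final letter, then $z=u\in D(v)$ and we are done.
\item Otherwise the deletion takes place inside $u$. Then $z=z'1$, where $z'$ is $u$ with one $1$ removed. Hence $z'\preceq u\preceq v$, so $z'\in D(v)$.
\begin{enumerate}
\item If $z\in D(v)$, we are again done.
\item If $z\notin D(v)$, then $z=z'1$ with $z'\in D(v)$ and $z'1\notin D(v)$, which means exactly $z\in\partial_1D(v)$. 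By \cref{lem:lex-delete}, $z\lexlt y$, and combined with $y\lexlt v$ this gives $z\lexlt v$. Therefore $z\in\mathcal H(v)$.
\end{enumerate}
\end{enumerate}

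The only point that needs care is the last step, which uses transitivity of $\lexlt$. The lexicographic order with the convention that a proper prefix is smaller is a strict total order on $\bits$, so transitivity is standard. If desired, it can be checked directly by comparing the first positions where the three words differ, treating the case where one word is a prefix of another separately. No other obstacle is expected: all the structural content is already contained in \cref{lem:lex-delete} and in the prefix-closure of $D(v)$.
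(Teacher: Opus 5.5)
Your proof is correct and follows essentially the same route as the paper. You handle $y\in D(v)$ by subsequence closure. For $y\in\mathcal H(v)$, you separate deleting the final $1$ from deleting a nonfinal one, and conclude $z\in D(v)$ or $z\in\partial_1D(v)$ with $z\lexlt y\lexlt v$ via \cref{lem:lex-delete}.
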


\begin{proof}
For $y\in D(v)$ this follows from subsequence closure.  Now let
$y\in\mathcal H(v)$.  Thus $y=y^-1$, with $y^-\in D(v)$ and
$y\lexlt v$.

If the deleted letter is the final $1$, the result is $y^-\in D(v)$.  If a
nonfinal $1$ is deleted, call the resulting word $z=z^-1$.  It still ends in the
original final $1$, and $z^-$ is a subsequence of $y^-$, hence belongs to
$D(v)$.  Therefore either $z\in D(v)$, or
$z\in\partial_1D(v)$.  In the latter case, \cref{lem:lex-delete} gives
$z\lexlt y\lexlt v$, so $z\in\mathcal H(v)$.  In both cases
$z\in\mathcal L(v)$.
\end{proof}

Write
\begin{equation}\label{eq:j-coeff}
  J_v=\sum_{a,b\ge0}j_{a,b}X^aY^b.
\end{equation}

\begin{corollary}\label{cor:marked-deletion}
For $b\ge1$,
\begin{equation}\label{eq:marked-ineq}
  b\,j_{a,b}\le(a+b)j_{a,b-1}.
\end{equation}
In particular, for every $m\ge1$,
\begin{equation}\label{eq:diag-subdiag}
  j_{m,m}\le2j_{m,m-1}.
\end{equation}
\end{corollary}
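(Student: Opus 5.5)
A double-counting argument on marked words will give \eqref{eq:marked-ineq}. Let $\mathcal L_{a,b}(v)$ be the set of words in $\mathcal L(v)$ with $a$ zeros and $b$ ones. By \cref{prop:H-lex} and \eqref{eq:j-coeff} we have $j_{a,b}=|\mathcal L_{a,b}(v)|$, since $\omega(y)=X^aY^b$ exactly for such words.

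Consider the set $P$ of pairs $(y,p)$ with $y\in\mathcal L_{a,b}(v)$ and $p$ a position of $y$ carrying the letter $1$. Then $|P|=b\,j_{a,b}$. Define the map $(y,p)\mapsto z$, where $z$ is $y$ with position $p$ deleted. By \cref{prop:delete-one}, $z\in\mathcal L(v)$. It has $a$ zeros and $b-1$ ones, so $z\in\mathcal L_{a,b-1}(v)$.

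Next bound the fibres. A pair $(y,p)$ is recovered from $z$ together with the insertion slot: $y$ is obtained by inserting a $1$ into one of the $|z|+1=a+b$ gaps of $z$, and $p$ is the index of the inserted letter. Distinct pairs $(y,p)$ give distinct pairs $(z,\text{gap})$, because the pair is reconstructed from them. Hence the map $(y,p)\mapsto(z,\text{gap})$ is injective. Its image lies in $\mathcal L_{a,b-1}(v)\times\{0,\dots,a+b-1\}$, so $b\,j_{a,b}\le(a+b)\,j_{a,b-1}$.

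It is harmless that different insertion gaps can produce the same word $y$, for example inserting $1$ next to an existing run of $1$'s. We count marked pairs $(y,p)$, not words, and the pair is still determined by $(z,\text{gap})$. The only real input is the closure property of \cref{prop:delete-one}; the rest is bookkeeping, so I expect no obstacle.

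For \eqref{eq:diag-subdiag}, take $a=b=m\ge1$. Then \eqref{eq:marked-ineq} reads $m\,j_{m,m}\le 2m\,j_{m,m-1}$, and dividing by $m>0$ gives $j_{m,m}\le2j_{m,m-1}$.
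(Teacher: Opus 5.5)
Your proposal is correct and follows essentially the same argument as the paper. It counts pairs $(y,p)$ with a marked $1$, deletes that $1$ via \cref{prop:delete-one} to land in $\mathcal L(v)$ with composition $(a,b-1)$, records which of the $|z|+1=a+b$ insertion slots it came from to obtain an injection, and then specializes to $a=b=m$.
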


\begin{proof}
Count pairs consisting of a word
$y\in\mathcal L(v)$ with $(\zz(y),\oo(y))=(a,b)$ and a marked occurrence
of $1$.  There are $b j_{a,b}$ such pairs.  Delete the marked $1$.  By
\cref{prop:delete-one}, the resulting word $z$ belongs to $\mathcal L(v)$
and has composition $(a,b-1)$.  Record also the one of the
$|z|+1=a+b$ insertion slots from which the marked $1$ was deleted.  The word
$z$ together with that marked slot uniquely recovers the source pair, so the
map is injective into a set of size $(a+b)j_{a,b-1}$.  Setting $a=b=m$ in
\eqref{eq:marked-ineq} gives \eqref{eq:diag-subdiag}.
\end{proof}

We now evaluate the factorization from \cref{cor:cross-factor}.  For a
single monomial, direct inspection of \eqref{eq:phi-def} gives
\begin{equation}\label{eq:monomial-phi}
 \PhiNeg\!\left(\Lambda X^aY^b\right)
 =\begin{cases}
   2^{-2m-1},&a=b=m,\\
   -2^{-2b-2},&a=b+1,\\
   0,&\text{otherwise}.
  \end{cases}
\end{equation}
Indeed, when $a-b\ge2$, the two positive child terms and the negative parent
term cancel after evaluation at $X=Y=1/2$; when $a-b<0$, none lies in the
strict half-plane $a>b$.

Since the empty word belongs to $\mathcal L(v)$, one has $j_{0,0}=1$.
Applying \eqref{eq:monomial-phi} to \eqref{eq:j-coeff} yields
\begin{align}
 \PhiNeg(\Lambda J_v)
 &=\sum_{m\ge0}2^{-2m-1}j_{m,m}
   -\sum_{m\ge0}2^{-2m-2}j_{m+1,m}
   \notag\\
 &=\frac12+
   \sum_{m\ge1}
   \left(2^{-2m-1}j_{m,m}
   -2^{-2m}j_{m,m-1}\right).
 \label{eq:halfplane-defects}
\end{align}
Every summand in parentheses is nonpositive by
\eqref{eq:diag-subdiag}; hence
\begin{equation}\label{eq:weak-bound}
  \PhiNeg(\Lambda J_v)\le\frac12.
\end{equation}

It remains to absorb the exceptional monomial $s_v$.  Put
\[
  z=\zz(v),
  \qquad
  o=\oo(v).
\]
If $z\le o$, then $s_v=X^{z+1}Y^{o+1}$ does not lie in the strict half-plane,
so \eqref{eq:weak-bound} and \eqref{eq:cross-factor} give
$\PhiNeg(C_v)\le1/2$.

Suppose now that $z>o$.  By \cref{lem:one-degree},
  $$j_{m,m}=j_{m+1,m}=0\ \text{for}\ m\geq o+1.$$
On the other hand, $D(v)$ contains a subsequence formed from all $o$ ones of
$v$ and any $o+1$ of its zeros, so
\[
  j_{o+1,o}\ge1.
\]
The $m=o+1$ term of \eqref{eq:halfplane-defects} is therefore at most
$-2^{-2o-2}$, while every other defect is nonpositive.  Thus
\begin{equation}\label{eq:strict-room}
  \PhiNeg(\Lambda J_v)\le\frac12-2^{-2o-2}.
\end{equation}
Also
\[
  \PhiNeg(s_v)=2^{-(z+o+2)}\le2^{-2o-3}.
\]
Combining this with \eqref{eq:strict-room}, and using
$\PhiNeg(1)=0$, gives
\[
  \PhiNeg(C_v)
  \le\frac12-2^{-2o-3}
  <\frac12.
\]
Together with the case $z\le o$, we have proved
\[
  \PhiNeg(C_v)\le\frac12
\]
for every word $v$.  The identity in
\cref{prop:first-exit} now gives
\[
  |S_{t,k}|\le2^{k-1},
\]
which proves \cref{thm:main}.

\section{Acknowledgements}
The authors thank ChatGPT 5.6 Pro for assistance with some of the mathematical
work presented in this paper. The authors subsequently verified the argument
and take full responsibility for the final content. An accompanying Lean
formalization is available at
\url{https://github.com/ifeelok92/tu-deng-conjecture-lean-formal-proof}. It
provides an end-to-end machine-checked proof of the Tu--Deng conjecture,
including the principal intermediate results and the final reduction to the
original modular counting statement.

\end{document}